\documentclass[11pt,a4paper]{article}
\usepackage[T1]{fontenc}
\usepackage{lmodern}
\usepackage[margin=26mm]{geometry}
\usepackage{amsmath,amssymb,amsthm,mathtools}
\usepackage{microtype}
\usepackage{enumitem}
\usepackage{needspace}
\usepackage{xcolor}
\usepackage[colorlinks=true,linkcolor=blue!45!black,citecolor=blue!45!black,urlcolor=blue!45!black]{hyperref}
\hypersetup{pdftitle={An Improved Bound on Spanning Bipartite Connectivity},pdfauthor={G. Gutin, Y. Hao and Y. Zhou}}
\newtheorem{theorem}{Theorem}[section]
\newtheorem{lemma}[theorem]{Lemma}
\newtheorem{claim}[theorem]{Claim}
\newtheorem{proposition}[theorem]{Proposition}
\newtheorem{obs}[theorem]{Observation}

\theoremstyle{remark}

\newcommand{\E}{\mathbb E}

\setlist[enumerate]{itemsep=2pt,topsep=4pt}
\setlist[itemize]{itemsep=2pt,topsep=4pt}
\allowdisplaybreaks[1]
\title{An $O(k\log(n/k))$ Bound on Spanning Bipartite Connectivity}
\author{G. Gutin\footnote{Royal Holloway University of London, UK}, Y. Hao\footnote{Nankai University, PR China} and Y. Zhou\footnote{Shenzhen Institutes of Advance Technology, PR China}}
\date{}

\begin{document}
	\maketitle
	\begin{abstract}
		For integers $1\le k\le n/2$, let $f(k,n)$ be the least integer $s$ such that every $s$-connected graph on $n$ vertices contains a spanning bipartite $k$-connected subgraph. Thomassen conjectured that $f(k,n)$ is bounded by a function of $k$ alone. Delcourt and Ferber proved $f(k,n)=O(k^3\log n)$, and Yuster subsequently obtained $f(k,n)\le22k^2\log_2 n$. We prove that, for $2\le k\le n/2$,
		\[
		f(k,n)\le\min\left\{n-1,\,
		\left\lfloor6(k-1)\log_2\frac{n}{k-1}\right\rfloor\right\}.
		\]
		In particular, $f(k,n)=O(k\log(n/k))$. 
	\end{abstract}
	
	\noindent\textbf{Keywords.} vertex-connectivity; spanning bipartite subgraph; random bipartition; separator; matching.\par
	\smallskip
	\noindent\textbf{Mathematics Subject Classification (2020).} 05C40, 05C35.
	
	\section{Introduction}
	All graphs considered here are finite and simple. A graph $H$ is \emph{$k$-connected} if $|V(H)|\ge k+1$ and $H-X$ is connected whenever $X\subseteq V(H)$ and $|X|\le k-1$. For integers $1\le k\le n/2$, define
	\[
	f(k,n)=\min\left\{s\in\mathbb Z_{\ge0}:
	\begin{array}{l}
		\text{every $s$-connected graph on $n$ vertices contains}\\[-1pt]
		\text{a spanning bipartite $k$-connected subgraph}
	\end{array}\right\}.
	\]
	Thomassen conjectured in 1989 that $f(k,n)$ is bounded by a function of $k$ alone~\cite{Thomassen}. Delcourt and Ferber obtained $f(k,n)=O(k^3\log n)$~\cite{DF}. Yuster subsequently proved
	\[
	f(k,n)\le22k^2\log_2 n,
	\]
	together with stronger estimates in certain ranges where $k$ grows with $n$~\cite{Yuster}. For $k=1$, a spanning tree gives $f(1,n)=1$. For $k=2$, the exact value is $f(2,n)=3$ for every $n\ge5$~\cite{Yuster}.
	
	Our main result improves the dependence on $k$ in the general logarithmic estimate from quadratic to linear and expresses the logarithmic term in terms of $n/k$.
	
	\begin{theorem}\label{thm:main}
		For all integers $2\le k\le n/2$,
		\begin{equation}\label{eq:main}
			f(k,n)\le\min\left\{n-1,\,
			\left\lfloor6(k-1)\log_2\frac{n}{k-1}\right\rfloor\right\}.
		\end{equation}
	\end{theorem}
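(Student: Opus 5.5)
The bound $f(k,n)\le n-1$ is trivial: an $(n-1)$-connected graph on $n$ vertices is $K_n$, and $K_n$ contains $K_{\lceil n/2\rceil,\lfloor n/2\rfloor}$, which is $\lfloor n/2\rfloor$-connected and hence $k$-connected since $k\le n/2$. For the main bound, set $s=\lfloor 6(k-1)\log_2\frac{n}{k-1}\rfloor$ and let $G$ be an $s$-connected graph on $n$ vertices. We may assume $s\le n-2$, since otherwise the first bound already applies. The plan is to take a uniformly random bipartition $V=A\cup B$, let $H$ be the bipartite subgraph consisting of all $A$--$B$ edges, and show that with positive probability $H$ has no separator of size at most $k-1$.

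Suppose $X$ with $|X|\le k-1$ separates $H$. Then $H-X$ has a component whose vertex set $C$ is smallest; choose $C$ and $X$ together so that $|C|=c\le (n-k+1)/2$. We call $(C,X)$ a \emph{bad pair}. The key structural fact is that every $G$-edge from $C$ to $V\setminus(C\cup X)$ must be monochromatic, i.e. both of its ends lie in $A$ or both lie in $B$. Since $G$ is $s$-connected, the boundary $N_G(C)$ has size at least $s$, so at least $s-(k-1)$ neighbours of $C$ lie outside $X$. Using a matching argument in the spirit of the keywords (Kőnig/Menger), we extract a matching $M$ of size at least $\min\{c,\,s-k+1\}/2$, or more carefully of size $\ge \min\{c, s\}-(k-1)$, between $C$ and $V\setminus(C\cup X)$. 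Its edges are independent events, each monochromatic with probability $1/2$, so $\Pr[(C,X)\text{ bad}]\le 2^{-|M|}$. For $c$ small we need a better estimate. We have $c\ge1$, and a vertex $v\in C$ has $\deg_G v\ge s$, with $\deg_H v\le c-1+|X|$ or so. In the case $c=1$, the lone vertex has at least $s-k+1$ edges to outside vertices that all must be monochromatic, so the probability is $2^{-(s-k+1)}$.

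For the union bound, we control the number of pairs. Rather than counting sets $C$ directly, which gives $\binom{n}{c}$ and is too many for large $c$, we count via the matching. Since $C\cup X$ is determined up to $X$ and $C$ is connected in $H$, we instead use the standard trick of enumerating only the $\le k-1+\dots$ vertex set of a small witness. Concretely, I would fix $X$ ($\le \binom{n}{k-1}\le (en/(k-1))^{k-1}$ choices) and note that for $c\ge s$ the matching has size $\ge s-k+1\ge 5(k-1)\log_2\frac{n}{k-1}$. I would then bound the number of relevant $C$ by choosing a witness set of $O(k)$ vertices inside $C$ that determines the event, or, more plausibly, bound the events ``some vertex set of size $c$ with $\ge s-k+1$ monochromatic boundary matching edges'' by enumerating $X$ together with only $\le k$ representative vertices. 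The target is a probability of order $\left(\frac{n}{k-1}\right)^{O(k-1)}2^{-5(k-1)\log_2(n/(k-1))}<1$, and the factor $6$ leaves room for the entropy of $X$ and of the witness.

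I expect the main obstacle to be exactly this counting step: avoiding the $\binom{n}{c}$ factor for medium $c$. My first attempt would be a two-stage exposure. In the first stage, show that whp every vertex has at least $(s-k)/2$ neighbours of the opposite colour, which costs $n2^{-\Omega(s)}$ and succeeds since $s\ge 6\log_2 n$ when $k=2$. This forces every component of $H-X$ to have size $\ge s/2-k$. Then I would handle large components by a Menger-type argument, using $s$ internally disjoint paths between vertex pairs and requiring them to survive, where only $X$ is enumerated.
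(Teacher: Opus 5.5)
The $n-1$ bound is fine. The main plan, however, has a fatal flaw that no refinement of the counting step can repair: the graph $H$ of bichromatic edges of a \emph{uniformly random} vertex colouring is typically not even connected.

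Take $G=K_{s,n-s}$ with sides $Y,Z$. It is $s$-connected whenever $n\ge 2s$, which holds for your $s$ once $n/(k-1)$ is moderately large. For a colouring $V=A\cup B$, $H$ is the vertex-disjoint union of the complete bipartite graphs between $Y\cap A$ and $Z\cap B$ and between $Y\cap B$ and $Z\cap A$. Hence $H$ is disconnected with probability $1-2^{1-n}$; it is connected only when the colouring is $(Y,Z)$ up to swapping colours. The same happens for every bipartite $G$: flipping the colours on one side turns $H$ into $G[A']\cup G[B']$ for a uniformly random partition $(A',B')$.

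Every failing colouring yields a bad pair, and here typically every $X$ with $|X|\le k-1$ separates $H$. So the sum of $\Pr[(C,X)\text{ bad}]$ over all pairs exceeds $1$. No union bound, witness-set enumeration or two-stage exposure can then succeed. The counting obstacle you flag is a symptom of using the wrong random object, not a technicality.

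The auxiliary steps also fail as stated. First, $(s-k)/2$ lies only $k/2$ below the mean number of opposite-coloured neighbours of a vertex of degree $s$. For $k=2$ such a vertex therefore violates your first-stage condition with probability close to $1/2$, not $2^{-\Omega(s)}$. Second, a path of length $\ell$ is entirely bichromatic only with probability $2^{-\ell}$, while the internally disjoint paths given by Menger's theorem may be long.

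The missing idea is to randomise only over pre-built bipartite $k$-connected blocks. The paper fixes a minimum admissible partition into singletons and parts of order at least $d$, each carrying a fixed spanning bipartite $k$-connected graph $H_i$. It then flips each part's bipartition independently. For $K_{s,n-s}$ this partition is the single part $V(G)$, so no randomness is needed at all.

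Minimality forces $p_{ij},q_{ij}\le k-1$ (Lemma~\ref{lem:compatibility}), which gives $\E\,2^{-|F_i|/(k-1)}\le(3/4)^{|S_i|/(k-1)}$. Instead of enumerating sets $C$, the paper uses the weighted potential $\Phi$ and the deterministic separator descent of Lemma~\ref{lem:extraction}. This produces a union of at least two parts supporting a spanning bipartite $k$-connected graph, contradicting minimality.
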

	
	The constant in~\eqref{eq:main} is chosen for a simple statement. Theorem~\ref{thm:quantitative} gives a more precise threshold using the same proof.
	
	The proof follows the minimum-partition approach used by Yuster. Fix an integer $d\ge2k$ and partition the vertex set into as few parts as possible, each a singleton or a set of at least $d$ vertices supporting a spanning bipartite $k$-connected graph. We call such a partition admissible. High connectivity supplies a boundary matching for every non-singleton part. For each singleton we use all its incident edges. We refer to these selected edges as \emph{attachment edges}.
	
	Independently reversing the bipartitions of the parts gives a random global bipartition. Minimality bounds each of the two compatibility classes of attachment edges toward a fixed target part by $k-1$. This yields the exponential moment estimate
	\[
	\E\,2^{-|F_i|/(k-1)}
	\le (3/4)^{|S_i|/(k-1)},
	\]
	where $S_i$ is the initial attachment set of a part and $F_i$ is its bichromatic subset. Summing these moments with suitable weights gives a colouring for which a single potential is less than one.
	
	The extraction step alternates two operations. First, a singleton of current degree less than $k$ is pruned from the proposed merger. Its removal decreases the potential. Once all such singletons have been pruned, the current graph has minimum degree at least $k$. If it is not $k$-connected, we pass to the parts represented in a component behind a separator of order at most $k-1$. A component carrying at most half the potential can be chosen, and each retained attachment set loses at most $k-1$ edges. The resulting factor of at most two exactly compensates for the halving. A bipartite size estimate keeps the proposed merger above the admissibility threshold $d$ throughout the process.
	
	
	\section{Preliminaries}\label{sec:prelim}
	We denote by $d_H(v)$, $\delta(H)$ and $\kappa(H)$ the degree of the vertex $v$, the minimum degree and the vertex-connectivity of a graph $H$, respectively. For disjoint vertex sets $A,B\subseteq V(G)$, let
	\[
	E_G(A,B)=\{ab\in E(G):a\in A,\ b\in B\}.
	\]
	We omit the subscript when the ambient graph is clear. Let  $[t]=\{1,\ldots,t\}$. 
	
	The elementary bound $f(k,n)\le n-1$ follows because the only $(n-1)$-connected graph on $n$ vertices is $K_n$, which contains a spanning $K_{k,n-k}$. As $k\leq n/2$, both sides of this complete bipartite graph have at least $k$ vertices, so it is $k$-connected. We end this section by introducing the following two very simple merging operations for obtaining larger $k$-connected bipartite subgraphs.
	
	\begin{proposition}\label{prop:merge}
		Let $H_1,H_2$ be vertex-disjoint bipartite $k$-connected subgraphs of $G$, with bipartitions $(A_1,B_1)$ and $(A_2,B_2)$. If there is a matching of size at least $k$ in $E_G(A_1,B_2)\cup E_G(B_1, A_2)$ or $E_G(A_1,A_2)\cup E_G(B_1,B_2)$, then $G[V(H_1)\cup V(H_2)]$ has a spanning bipartite $k$-connected subgraph. 
	\end{proposition}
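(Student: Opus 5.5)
The idea is to build the subgraph explicitly: take $H_1\cup H_2$ together with $k$ matching edges, possibly after swapping the sides of $H_2$. Suppose first that the matching $M$, $|M|\ge k$, lies in $E_G(A_1,B_2)\cup E_G(B_1,A_2)$. We keep only $k$ of its edges and set
\[
H=H_1\cup H_2\cup M,
\]
with bipartition $(A_1\cup A_2,\;B_1\cup B_2)$. Every edge of $M$ joins $A_1$ to $B_2$ or $B_1$ to $A_2$, so it crosses this bipartition. Hence $H$ is bipartite and spans $V(H_1)\cup V(H_2)$. In the other case, $M\subseteq E_G(A_1,A_2)\cup E_G(B_1,B_2)$, we swap the names of $A_2$ and $B_2$. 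This reduces to the first case, with bipartition $(A_1\cup B_2,\;B_1\cup A_2)$.

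It remains to show that $H$ is $k$-connected. Since $|V(H)|\ge |V(H_1)|\ge k+1$, we only need to check that $H-X$ is connected for every $X\subseteq V(H)$ with $|X|\le k-1$. Since $H_1$ is $k$-connected and $|X\cap V(H_1)|\le k-1$, the graph $H_1-X$ is connected. Likewise $H_2-X$ is connected, and both are nonempty because each $H_i$ has at least $k+1$ vertices.

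The key step is to show that some edge of $M$ survives the deletion of $X$. The $k$ edges of $M$ are pairwise vertex-disjoint, so each vertex of $X$ meets at most one of them. Thus $X$ destroys at most $k-1$ of these edges. At least one edge $uv\in M$ therefore has $u\in V(H_1)\setminus X$ and $v\in V(H_2)\setminus X$, and it joins the connected graphs $H_1-X$ and $H_2-X$. So $H-X$ is connected.

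This is the only place where we use that $M$ is a matching rather than just a set of $k$ edges; a star would not do. The remaining checks are routine: the $k$ edges lie in $G[V(H_1)\cup V(H_2)]$, and the relabelling in the second case preserves bipartiteness.
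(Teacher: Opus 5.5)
Your proof is correct and follows the paper's argument: take $H_1\cup H_2$ plus $k$ matching edges with bipartition $(A_1\cup A_2,B_1\cup B_2)$ (after swapping $A_2,B_2$ in the other case). Deleting at most $k-1$ vertices leaves both $H_i$ nonempty and connected, and at least one matching edge survives because each deleted vertex meets at most one matching edge.
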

	\begin{proof}
		Suppose without loss of generality that $E_G(A_1,B_2)\cup E_G(B_1, A_2)$ contains a matching of at least $k$ edges.  Deleting at most $k-1$ vertices leaves each $H_i=(A_i,B_i)$ nonempty and connected. In addition, at least one of the chosen matching edges in $E_G(A_1,B_2)\cup E_G(B_1, A_2)$ survives. Thus, the union of $H_1$, $H_2$ and the chosen matching edges is a bipartite $k$-connected graph with bipartition $(A_1\cup A_2,B_1\cup B_2)$. 
	\end{proof}
	
	\begin{proposition}\label{prop:absorb}
		Let $H=(A,B)$ be a bipartite $k$-connected subgraph of $G$ and $v\notin V(H)$. If $v$ has $k$ neighbours in $A$ or $B$, then $G[V(H)\cup\{v\}]$ has a spanning bipartite $k$-connected subgraph. 
	\end{proposition}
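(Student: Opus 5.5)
Suppose without loss of generality that $v$ has $k$ neighbours $u_1,\dots,u_k$ in $A$. I will put $v$ on the side $B$, giving the bipartition $(A,B\cup\{v\})$, and take as candidate spanning subgraph the graph $H'$ obtained from $H$ by adding $v$ and the $k$ edges $vu_1,\dots,vu_k$. This is bipartite, since every new edge joins $B\cup\{v\}$ to $A$, and it spans $V(H)\cup\{v\}$. So everything reduces to showing that $H'$ is $k$-connected. The argument is the one-vertex version of Proposition~\ref{prop:merge}, with the $k$ edges at $v$ playing the role of the matching. The only difference is that these edges share the endpoint $v$, so I will treat separately the case in which $v$ itself is deleted.

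First, $|V(H')|=|V(H)|+1\ge k+2\ge k+1$. Now let $X\subseteq V(H')$ with $|X|\le k-1$. I split into two cases according to whether $v$ lies in $X$.

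\emph{Case $v\in X$.} Then $H'-X=H-(X\setminus\{v\})$. Since $|X\setminus\{v\}|\le k-2$ and $H$ is $k$-connected, this graph is connected.

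\emph{Case $v\notin X$.} Then $X\subseteq V(H)$ with $|X|\le k-1$, so $H-X$ is nonempty and connected by the $k$-connectivity of $H$. Moreover, at most $k-1$ of the $k$ neighbours $u_i$ lie in $X$, so some $u_i$ survives. The vertex $v$ is therefore joined to the connected graph $H-X$, and $H'-X$ is connected.

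In both cases $H'-X$ is connected, so $H'$ is a spanning bipartite $k$-connected subgraph of $G[V(H)\cup\{v\}]$. The case of $k$ neighbours in $B$ is symmetric: place $v$ on the side $A$. There is no real obstacle here. The only point needing care is the case in which $v$ is deleted, and that case is handled directly by the $k$-connectivity of $H$.
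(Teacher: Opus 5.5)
Your proof is correct and follows the same approach as the paper: place $v$ on the side opposite its $k$ neighbours, add those $k$ edges to $H$, and use the $k$-connectivity of $H$ together with the fact that one of the $k$ neighbours survives any deletion of at most $k-1$ vertices. Your explicit split according to whether $v\in X$ is just a more detailed version of the paper's one-line argument.
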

	\begin{proof}
		Suppose without loss of generality that $v$ has $k$ neighbours in $B$. Consider the bipartite subgraph $(A\cup \{v\}, B)$ of $G$ formed by $H$ and $k$ edges from $v$ to $B$. As $H$ is $k$-connected, after deletion of at most $k-1$ vertices, $H$ remains nonempty and connected. In addition, if $v$ survives, so does at least one of its chosen neighbours. Thus, the bipartite subgraph  $(A\cup \{v\}, B)$ is $k$-connected. 
	\end{proof}
	
	\section{The minimum admissible partition and a potential function}\label{sec:partition}
	Let $k\geq 2$ and fix an integer $d\ge2k$. A partition
	\[
	\mathcal P=\{V_1,\ldots,V_t\}
	\]
	of $V(G)$ is admissible w.r.t. $d$ if each part is a singleton or has at least $d$ vertices and admits a spanning bipartite $k$-connected graph. Choose an admissible partition with the minimum number of parts. Such a partition exists because the partition into singletons is always admissible. For each non-singleton part $V_i$, fix a spanning bipartite $k$-connected graph $H_i$ on $V_i$ and a proper bipartition $(A_i,B_i)$ of $H_i$. For a singleton part $V_i=\{v_i\}$, let $H_i$ be the isolated graph on its vertex, and set $A_i=\varnothing$ and $B_i=\{v_i\}$.
	
	\begin{lemma}\label{lem:boundary}
		If $t\geq 2$ and $G$ is $(d+2k-3)$-connected, then every non-singleton part $V_i$ has a matching of size at least $d$ in $E_G(V_i,V(G)\setminus V_i)$.
	\end{lemma}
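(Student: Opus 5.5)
The plan is to apply König's theorem to the bipartite graph with sides $V_i$ and $W=V(G)\setminus V_i$ and edge set $E_G(V_i,W)$. Suppose, for contradiction, that its maximum matching has fewer than $d$ edges. Then it has a vertex cover $C$ with $|C|\le d-1$. Write $C=C_1\cup C_2$ with $C_1\subseteq V_i$ and $C_2\subseteq W$. Because $C$ covers every edge of $E_G(V_i,W)$, there are no edges of $G$ between $V_i\setminus C_1$ and $W\setminus C_2$.

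\emph{Case 1: both $V_i\setminus C_1$ and $W\setminus C_2$ are nonempty.} Then $C$ separates $G$. Since $k\ge2$, $G$ is $(d+2k-3)$-connected with $d+2k-3\ge d+1$, so every separator has at least $d+1$ vertices. This contradicts $|C|\le d-1$.

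\emph{Case 2: $V_i\subseteq C_1$.} Then $|V_i|\le d-1$, which contradicts the assumption that the non-singleton part $V_i$ has at least $d$ vertices.

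\emph{Case 3: $W\subseteq C_2$.} Then $|W|\le d-1$, and this is the only case needing more than counting; here the minimality of $\mathcal P$ gives the contradiction.
\begin{itemize}
\item $W$ is nonempty, since $t\ge2$.
\item Every non-singleton part has at least $d$ vertices, so every part contained in $W$ is a singleton. Choose one, say $\{v\}$.
\item Since $\delta(G)\ge\kappa(G)\ge d+2k-3$ and $v$ has at most $|W|-1\le d-2$ neighbours in $W$, it has at least $(d+2k-3)-(d-2)=2k-1$ neighbours in $V_i=A_i\cup B_i$.
\item By pigeonhole, $v$ has at least $k$ neighbours in $A_i$ or at least $k$ neighbours in $B_i$.
\item By Proposition~\ref{prop:absorb}, $G[V_i\cup\{v\}]$ has a spanning bipartite $k$-connected subgraph. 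This set has at least $d+1$ vertices, so it is an admissible non-singleton part.
\item Replacing the parts $V_i$ and $\{v\}$ by $V_i\cup\{v\}$ therefore gives an admissible partition with $t-1$ parts. This contradicts the minimality of $\mathcal P$.
\end{itemize}

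All three cases are impossible, so $E_G(V_i,V(G)\setminus V_i)$ contains a matching of size at least $d$.
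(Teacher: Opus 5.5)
Your proof is correct and takes essentially the same route as the paper. Both use König's theorem and then the separator argument. When $V(G)\setminus V_i$ lies inside the cover, both take a singleton $v$ whose degree forces at least $2k-1$ neighbours in $V_i$, so Proposition~\ref{prop:absorb} contradicts the minimality of $\mathcal P$. The paper only orders this last step the other way round (minimality caps $v$'s neighbours in $V_i$ at $2k-2$, which then contradicts $\delta(G)\ge d+2k-3$). It also disposes of your Case~2 at the start by noting $|V_i|\ge d>|C|$.
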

	\begin{proof}
		Let $\overline{V_i}=V(G)\setminus V_i$. Suppose the largest boundary matching has size $a<d$. By K\"onig's theorem, the bipartite graph with vertex classes $V_i,\overline{V_i}$ and edge set $E_G(V_i,\overline{V_i})$ has a vertex cover $C$ of size $a$. Since $|V_i|\ge d>a$, the set $V_i\setminus C$ is nonempty. 
		
		If $\overline{V_i} \setminus C$ is also nonempty, deleting $C$ would disconnect $G$, contrary to its $(d+2k-3)$-connectivity and the fact that $d+2k-3\geq d+1>a=|C|$. Hence $\overline{V_i}\subseteq C$, and $1\le|\overline{V_i}|\le a\le d-1$. Every part contained in $\overline{V_i}$ is therefore a singleton. Let $V_j=\{v\}$ be one of these singletons. Then, $v$ has at most $2k-2$ neighbours in $V_i$. In fact, if not, then by the pigeonhole principle $v$ has at least $k$ neighbours in $A_i$ or $B_i$ and therefore by Proposition~\ref{prop:absorb} $(\{V_1,\ldots,V_t\}\setminus \{V_i,V_j\})\cup \{V_i\cup V_j\}$ is an admissible partition with fewer parts, a contradiction to the  minimality of $\mathcal P$. Thus,
		\[
		d_G(v)\le2k-2+|\overline{V_i}|-1\le d+2k-4,
		\]
		which contradicts $\delta(G)\ge\kappa(G)\geq d+2k-3$. This completes the proof.
	\end{proof}
	
	Suppose now that $t\ge2$ and $G$ is $s$-connected, where $s\ge d+2k-3$.
	For each $i\in[t]$, choose an edge set $S_i$ as follows:
	\begin{itemize}
		\item if $V_i$ is non-singleton, choose exactly $d$ edges of a boundary matching supplied by Lemma~\ref{lem:boundary};
		\item if $V_i=\{v_i\}$, let $S_i$ be the set of all edges incident with $v_i$.
	\end{itemize}
	Thus
	\begin{equation}\label{eq:initial-sizes}
		|S_i|=d\quad\text{for non-singleton parts},\qquad
		|S_i|=d_G(v_i)\ge s\quad\text{for singleton parts}.
	\end{equation}
	
	For distinct $i,j\in[t]$, let $p_{ij}$ and $q_{ij}$ denote the numbers of edges in $S_i$ in the following two classes:
	\[
	\begin{aligned}
		p_{ij}&=|S_i\cap (E_G(A_i,B_j)\cup E_G(B_i,A_j))|,\\
		q_{ij}&=|S_i\cap (E_G(A_i,A_j)\cup E_G(B_i,B_j))|.
	\end{aligned}
	\]
	In particular, if $V_i=\{v_i\}$ is a singleton, then $A_i=\varnothing$ and $B_i=\{v_i\}$, so $p_{ij}$ counts the edges from $v_i$ to $A_j$, and $q_{ij}$ counts those from $v_i$ to $B_j$. Since $S_i$ contains all edges incident with $v_i$, these are precisely the numbers of its neighbours in $A_j$ and $B_j$, respectively.
	
	Then, the following holds. 
	
	\begin{lemma}\label{lem:compatibility}
		If $t\geq 2$, then for distinct $i,j\in[t]$,
		\[
		0\le p_{ij}\le k-1,\qquad 0\le q_{ij}\le k-1.
		\]
	\end{lemma}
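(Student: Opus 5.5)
The plan is to argue by contradiction from the minimality of $\mathcal P$. Nonnegativity is trivial, so suppose $p_{ij}\ge k$ or $q_{ij}\ge k$ for some distinct $i,j$. Then we exhibit an admissible partition with fewer parts, obtained by replacing $V_i,V_j$ with $V_i\cup V_j$. The union has at least $d$ vertices whenever either part is non-singleton. So the only things to check are that $V_i\cup V_j$ supports a spanning bipartite $k$-connected graph, and that the merged part has size at least $d$. We split into cases according to which of $V_i,V_j$ are singletons.

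\emph{Case 1: $V_i$ non-singleton.} Here $S_i$ consists of $d$ edges of a matching, so any subset of $S_i$ is a matching. The $p_{ij}$ edges of $S_i$ in $E_G(A_i,B_j)\cup E_G(B_i,A_j)$ therefore form a matching of size at least $k$; likewise for $q_{ij}$ with $E_G(A_i,A_j)\cup E_G(B_i,B_j)$. If $V_j$ is also non-singleton, Proposition~\ref{prop:merge} applies directly to $H_i,H_j$. If $V_j=\{v_j\}$ is a singleton, then $A_j=\varnothing$ and $B_j=\{v_j\}$. The relevant edges all go to $v_j$, and since they form a matching there is at most one of them. Hence $p_{ij},q_{ij}\le 1\le k-1$ automatically, as $k\ge2$.

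\emph{Case 2: $V_i=\{v_i\}$ a singleton.} Then $p_{ij}$ and $q_{ij}$ count the neighbours of $v_i$ in $A_j$ and in $B_j$. If $V_j$ is non-singleton and one of these is at least $k$, Proposition~\ref{prop:absorb} gives a spanning bipartite $k$-connected graph on $V_j\cup\{v_i\}$, which has at least $d+1$ vertices. This contradicts minimality. If $V_j=\{v_j\}$ is also a singleton, then $A_j=\varnothing$, so $p_{ij}=0$ and $q_{ij}\le 1\le k-1$.

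The only subtle point is the singleton–singleton and non-singleton–singleton situations, where no merging is available because a merged part of size two is not admissible. These are handled by the conventions $A=\varnothing$, $B=\{v\}$ together with the matching property of $S_i$ and $k\ge2$, rather than by minimality. Everything else is a direct application of Propositions~\ref{prop:merge} and~\ref{prop:absorb}. The hypothesis $t\ge2$ is needed only so that the sets $S_i$ (via Lemma~\ref{lem:boundary}) are defined.
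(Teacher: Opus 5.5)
Your proof is correct and follows essentially the same argument as the paper: use the matching/star structure of $S_i$ together with $k\ge2$ to rule out a singleton $V_j$, then contradict minimality via Proposition~\ref{prop:merge} when $V_i$ is non-singleton and via Proposition~\ref{prop:absorb} when it is a singleton. The only difference is the order of the case split. You branch on $V_i$ first and treat a singleton $V_j$ inside each branch, whereas the paper disposes of a singleton $V_j$ in one step before branching on $V_i$.
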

	\begin{proof}
		Suppose for a contradiction that $p_{ij}\geq k$ (the proof for $q_{ij}$ is identical). If $V_j$ were a singleton, $S_i$ would contain at most one edge incident with $V_j$, since $S_i$ is a matching or a simple star. This would contradict $p_{ij}\ge k\ge2$. Hence we consider two cases based on whether $V_i$ is a singleton.
		
		If $V_i$ is non-singleton, then the $p_{ij}$ edges form a matching in $E_G(A_i,B_j)\cup E_G(B_i,A_j)$ of size at least $k$ and therefore by Proposition~\ref{prop:merge} we can merge $V_i$ and $V_j$, contradicting minimality of $\mathcal P$. If $V_i=\{v\}$ is a singleton, then $v$ has at least $k$ neighbours in $A_j$ and therefore by Proposition~\ref{prop:absorb} we can merge $V_i$ and $V_j$, again contradicting minimality of $\mathcal P$. This completes the proof.x
	\end{proof}
	
	For each non-singleton part, independently choose one of the two coloring ($A_i$ blue and $B_i$ red or $B_i$ blue and $A_i$ red) of its bipartition $(A_i,B_i)$ with probability $1/2$, and independently colour each singleton red or blue with equal probability. This gives a red--blue colouring of all vertices, proper on each $H_i$. For each $i\in[t]$, let
	\[
	F_i=\{xy\in S_i:x\text{ and }y\text{ have different colours}\}.
	\]
	
	For a fixed outcome of this colouring and $W\subseteq[t]$, put
	\[
	U(W)=\bigcup_{i\in W}V_i,
	\qquad F_i(W)=F_i\cap E(V_i,U(W)\setminus V_i)\quad(i\in W),
	\]
	and let $J(W)$ be the simple graph on $U(W)$ formed by the graphs $H_i$ and the edge sets $F_i(W)$ with $i\in W$. Then $J(W)$ is bipartite, and $F_i(W)$ is a matching whenever $V_i$ is non-singleton. If $V_i=\{v_i\}$, $F_i(W)$ is exactly the edges in $J(w)$ incident with $v_i$, and therefore 
	\begin{equation}\label{eq:full-star}
		|F_i(W)|=d_{J(W)}(v_i).
	\end{equation}
	
	Define the singleton weight
	\begin{equation}\label{eq:omega}
		\omega=\max\left\{
		2(1-2^{-\frac{1}{k-1}}),\,
		\frac{2^{\frac{k+1}{2(k-1)}}}{k+1},\,
		\frac{2^{\frac{d-1}{2(k-1)}}}{d-1}
		\right\}.
	\end{equation}
	Give each non-singleton part weight $w_i=1$ and each singleton part weight $w_i=\omega$ and define the potential function as
	\begin{equation}\label{eq:potential}
		\Phi(W)=\sum_{i\in W}w_i2^{-\frac{|F_i(W)|}{k-1}}
		=\sum_{\substack{i\in W\\|V_i|>1}}2^{-\frac{|F_i(W)|}{k-1}}
		+\omega\sum_{\substack{i\in W\\|V_i|=1}}2^{-\frac{|F_i(W)|}{k-1}}.
	\end{equation}
	
	One can observe from \eqref{eq:potential} that if $W=\{i\}$ and $V_i$ is non-singleton, then $|F_i(W)|=0$ and therefore
	$\Phi(W)=2^{0}=1$. Thus, we have the following observation. 
	
	\begin{obs}\label{obs:phi=1}
		If	$W=\{i\}$ and $V_i$ is non-singleton, then $\Phi(W)=1$. 
	\end{obs}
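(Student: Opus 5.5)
The observation is a direct evaluation of the potential \eqref{eq:potential} at a singleton index set, so the plan is simply to unwind the definitions. Let $W=\{i\}$ with $V_i$ non-singleton. The first step is to identify the vertex set $U(W)$. By definition it is $\bigcup_{j\in W}V_j=V_i$, and hence $U(W)\setminus V_i=\varnothing$.

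The second step is to compute $F_i(W)$. By definition $F_i(W)=F_i\cap E(V_i,U(W)\setminus V_i)=F_i\cap E(V_i,\varnothing)=\varnothing$. This holds for every outcome of the random colouring, since no edge has an endpoint in the empty set. Consequently $|F_i(W)|=0$.

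The final step is to substitute into \eqref{eq:potential}. The sum has exactly one term, indexed by $i$. Since $V_i$ is non-singleton, its weight is $w_i=1$ rather than $\omega$. Therefore
\[
\Phi(W)=w_i2^{-|F_i(W)|/(k-1)}=1\cdot 2^{0}=1.
\]
There is no genuine obstacle. The only points to check are that the weight is $1$ and not $\omega$, which is why the hypothesis that $V_i$ is non-singleton is needed, and that the exponent is well defined, which holds because $k\ge2$ gives $k-1\ge1$.
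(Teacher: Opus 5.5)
Your proof is correct and is the same as the paper's one-line argument. With $W=\{i\}$ you get $U(W)\setminus V_i=\varnothing$, hence $|F_i(W)|=0$, and the weight $w_i=1$ for a non-singleton part gives $\Phi(W)=2^{0}=1$.
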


	\begin{lemma}\label{lem:barrier}
		If $Q$ is a bipartite graph on $z$ vertices, where $k+1\le z\le d-1$, then
		\[\omega\sum_{v\in V(Q)}2^{-\frac{d_Q(v)}{k-1}}\ge1.\]
	\end{lemma}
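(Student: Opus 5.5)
Since $x\mapsto 2^{-x/(k-1)}$ is convex, Jensen's inequality applied to the degrees of $Q$ gives
\[
\sum_{v\in V(Q)}2^{-\frac{d_Q(v)}{k-1}}\ \ge\ z\,2^{-\frac{\bar d}{k-1}},\qquad \bar d=\frac{2|E(Q)|}{z}.
\]
Because $Q$ is bipartite on $z$ vertices, it has at most $\lfloor z^2/4\rfloor$ edges, so $\bar d\le z/2$. The left-hand side of the lemma is therefore at least $\omega\,g(z)$, where
\[
g(z)=z\,2^{-\frac{z}{2(k-1)}}.
\]
It then suffices to show $\omega\ge 1/g(z)=2^{\frac{z}{2(k-1)}}/z$ for every integer $z$ with $k+1\le z\le d-1$.

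Consider the function $h(x)=2^{x/(2(k-1))}/x$ for real $x>0$. Its logarithm, $\frac{x\ln 2}{2(k-1)}-\ln x$, is convex in $x$, so $h$ is log-convex. In particular $h$ is quasi-convex, and on the interval $[k+1,d-1]$ it attains its maximum at an endpoint. This gives
\[
h(z)\le\max\left\{\frac{2^{\frac{k+1}{2(k-1)}}}{k+1},\ \frac{2^{\frac{d-1}{2(k-1)}}}{d-1}\right\}.
\]
Both terms are among those in the definition~\eqref{eq:omega} of $\omega$, so $\omega\ge h(z)$ and the lemma follows. (If $k+1>d-1$ there is nothing to prove, though $d\ge2k$ ensures the range is nonempty for $k\ge2$.)

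The main point to check is the convexity step. With $c=\frac{\ln 2}{2(k-1)}$, the function $\phi(x)=cx-\ln x$ has $\phi''(x)=1/x^2>0$, so $\phi$ is convex and $h=e^{\phi}$ is maximised on a closed interval at one of its endpoints. The Jensen step needs no integrality, and the edge bound $|E(Q)|\le z^2/4$ holds for any bipartite graph. The first term of $\omega$ is not used here; it is presumably needed elsewhere, for pruning low-degree singletons.
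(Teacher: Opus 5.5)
Your proposal is correct and follows essentially the same route as the paper: you use Jensen's inequality together with the bipartite edge bound $e(Q)\le z^2/4$ (which the paper obtains via Mantel) to reduce to $\omega z2^{-z/(2(k-1))}\ge1$. You then use log-convexity of $2^{z/(2(k-1))}/z$, which is equivalent to the paper's concavity of $\ln\bigl(z2^{-z/(2(k-1))}\bigr)$, to reduce to the two endpoints $z=k+1$ and $z=d-1$, which are covered by the last two terms in the definition of $\omega$.
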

	\begin{proof}
		By Mantel's Theorem, we have $e(Q)\le z^2/4$. Combining it with Jensen's inequality, we obtain
		\[
		\sum_{v\in V(Q)}2^{-\frac{d_Q(v)}{k-1}}
		\ge z2^{\frac{-2e(Q)}{z(k-1)}}\ge z2^{-\frac{z}{2(k-1)}}.
		\]
		As the function $\ln(z2^{-\frac{z}{2(k-1)}})=\ln z-\frac{\ln2}{2(k-1)}z$ is concave on the positive real axis, its minimum on $[k+1,d-1]$ occurs at an endpoint. The last two terms in~\eqref{eq:omega} ensure that $\omega z2^{-\frac{z}{2(k-1)}}\ge1$ at both endpoints, and therefore throughout this interval.
	\end{proof}
	
	\begin{lemma}\label{lem:component-size}
		Let $Q$ be bipartite with $\delta(Q)\ge k$, and let $S\subseteq V(Q)$ with $|S|\le k-1$. Every component $C$ of $Q-S$ has at least $2k-|S|\ge k+1$ vertices.
	\end{lemma}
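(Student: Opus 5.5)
Fix a component $C$ of $Q-S$, let $(X,Y)$ be the bipartition of $Q$, and write $X_C=X\cap V(C)$, $Y_C=Y\cap V(C)$. The plan is to bound $|Y|$ from below through a single vertex of $X_C$, and $|X|$ through a single vertex of $Y_C$, and then add the two bounds. This gives the first inequality. The second inequality $2k-|S|\ge k+1$ is just $|S|\le k-1$.

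First I show that $C$ meets both sides. Since $C$ is nonempty, pick $x\in V(C)$, say $x\in X$. All neighbours of $x$ lie in $Y$, and $d_Q(x)\ge k>|S|$. So $x$ has a neighbour outside $S$, and that neighbour lies in $C$ because $C$ is a component of $Q-S$. Therefore $X_C$ and $Y_C$ are both nonempty.

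Next I count neighbourhoods. Take $x\in X_C$. Every neighbour of $x$ outside $S$ lies in $C$, so it lies in $Y_C$, and every neighbour in $S$ lies in $S\cap Y$. Hence
\[
k\le d_Q(x)\le |Y_C|+|S\cap Y|,
\]
so $|Y_C|\ge k-|S\cap Y|$. Applying the same argument to a vertex $y\in Y_C$ gives $|X_C|\ge k-|S\cap X|$. Adding the two bounds yields
\[
|V(C)|=|X_C|+|Y_C|\ge 2k-|S\cap X|-|S\cap Y|=2k-|S|.
\]

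The only point needing care is the first step: we must know that $C$ has vertices on both sides before we can use a vertex of each. The degree condition $\delta(Q)\ge k>|S|$ gives this, and the rest is routine counting.
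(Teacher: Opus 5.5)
Your proof is correct and follows essentially the same route as the paper: first show via $\delta(Q)\ge k>|S|$ that $C$ meets both sides of the bipartition, then bound the neighbourhood of one vertex on each side by the opposite side of $C$ together with the corresponding part of $S$, and add the two bounds to get $|V(C)|+|S|\ge 2k$. Your argument that $C$ meets both sides is a slightly more explicit version of the paper's one-line contradiction, but the approach is the same.
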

	\begin{proof}
		Let $(A,B)$ be the bipartition of $Q$. The component $C$ meets both $A$ and $B$: otherwise every neighbour of each of its vertices would lie in $S$, contradicting minimum degree at least $k$. Choose $a\in A\cap V(C)$ and $b\in B\cap V(C)$. All neighbours of $a$ lie in $(B\cap V(C))\cup(B\cap S)$, and all neighbours of $b$ lie in $(A\cap V(C))\cup(A\cap S)$. Adding the two degree bounds yields $2k\le |V(C)|+|S|$, as desired. 
	\end{proof}
	
	\begin{lemma}\label{lem:extraction}
		Suppose that $|U([t])|\ge d$ and $\Phi([t])<1$. Then there is a set $W\subseteq[t]$ with $|W|\ge2$ such that $|U(W)|\ge d$ and $J(W)$ is bipartite and $k$-connected.
	\end{lemma}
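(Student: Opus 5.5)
The plan is to run an iterative shrinking procedure. It starts from $W=[t]$ and maintains two invariants: $\Phi(W)<1$, and $W$ is nonempty with a controlled structure. The procedure alternates a pruning step and a separator step, and $|W|$ strictly decreases each round, so it terminates. At termination $J(W)$ will have minimum degree at least $k$ and be $k$-connected. We then check $|W|\ge2$ and $|U(W)|\ge d$ from $\Phi(W)<1$.

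\emph{Pruning step.} Suppose some singleton $i\in W$ has $d_{J(W)}(v_i)\le k-1$. Replace $W$ by $W'=W\setminus\{i\}$. By \eqref{eq:full-star} the removed term is $\omega2^{-|F_i(W)|/(k-1)}\ge\omega/2$. Every other term changes only through edges at $v_i$, of which there are at most $k-1$. Each affected $F_j(W)$ loses at most one edge, because $F_j$ is a matching when $V_j$ is non-singleton and a star centred elsewhere when $V_j$ is a singleton. Hence each term is multiplied by at most $2^{1/(k-1)}$, and
\[
\Phi(W')\le2^{\frac1{k-1}}\bigl(\Phi(W)-\tfrac{\omega}{2}\bigr)<2^{\frac1{k-1}}\bigl(1-\tfrac{\omega}{2}\bigr)\le1,
\]
where the last inequality is exactly the first term in \eqref{eq:omega}.

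\emph{Separator step.} Once no pruning applies, $\delta(J(W))\ge k$. If $J(W)$ is not $k$-connected, take a separator $S$ with $|S|\le k-1$. Each $H_i$ with $V_i$ non-singleton is $k$-connected, so $V_i\setminus S$ lies in a single component of $J(W)-S$. Assign each such part to that component, and assign each singleton not in $S$ to its component; singletons inside $S$ are discarded. This splits the terms of $\Phi$ over at least two components. We pick a component $C$ whose assigned parts $W_C$ carry at most half of the total (before recomputation), so this sum is at most $\Phi(W)/2<1/2$. Passing to $W_C$, each retained $F_i$ loses only edges into $S$ (no edges of $J(W)$ run between different components). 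That is at most $|S|\le k-1$ edges, since $F_i$ is a matching or a star. So each term is at most doubled and $\Phi(W_C)<1$. Lemma~\ref{lem:component-size} gives $|V(C)|\ge k+1$, so $W_C$ is nonempty. Note also that $U(W_C)$ may include vertices of $S$ belonging to retained non-singleton parts; this is harmless.

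\emph{Termination.} At the end $J(W)$ is $k$-connected with $\delta\ge k$. If $W=\{i\}$ with $V_i$ non-singleton, Observation~\ref{obs:phi=1} gives $\Phi(W)=1$, a contradiction; a single singleton cannot have degree $k$. Hence $|W|\ge2$. If $W$ contains a non-singleton part, then $|U(W)|\ge d$ immediately. Otherwise all parts are singletons, so $|F_i(W)|=d_{J(W)}(v_i)$. Since $J(W)$ is bipartite with $\delta\ge k$, it has $z\ge2k\ge k+1$ vertices. If $z\le d-1$, Lemma~\ref{lem:barrier} gives $\Phi(W)\ge1$, a contradiction; hence $z\ge d$.

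\emph{Main obstacle.} The delicate point is ensuring that pruning never empties $W$, or drives it to a tiny all-singleton set outside the range of Lemma~\ref{lem:barrier}. Both invariants are needed here. I would handle it by observing that pruning stops as soon as $\delta\ge k$. If all parts are singletons and pruning continued down to $z\le k$ vertices, the graph would be so sparse that every degree is at most $k-1$. I would then argue, using the hypothesis $|U([t])|\ge d$ and the monotonicity of $\Phi$ along the pruning sequence, that the first moment the vertex count falls below $d$ already forces $\Phi\ge1$ via Lemma~\ref{lem:barrier}. If this endpoint bookkeeping fails, the fallback is to prune in a fixed order and apply Lemma~\ref{lem:barrier} at the last configuration with $z\in[k+1,d-1]$.
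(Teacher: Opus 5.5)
Your pruning estimate $2^{1/(k-1)}(1-\omega/2)\le1$ and your halving-versus-doubling separator step are exactly the paper's. However, the argument has a real gap at the point you flag yourself. The only invariant you actually maintain is $\Phi(W)<1$, and that alone cannot stop $W$ from collapsing: $\Phi(\varnothing)=0$, and a lone singleton has degree $0$ in $J(W)$, so it would itself be pruned. Nothing in your write-up rules out the process running down to $W=\varnothing$. Your termination paragraph assumes the process halts at a nonempty $W$ with $\delta(J(W))\ge k$ and $J(W)$ $k$-connected, so it is only conditional. Your proposed repair (``the first moment the vertex count falls below $d$'') points in the right direction, but it is sketched only for pruning and left hedged. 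A separator step can also produce a family consisting entirely of singletons, and you never check its size there.

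The missing step is to carry $|U(W)|\ge d$ as a second invariant alongside $\Phi(W)<1$. The paper does this by taking $W$ of minimum cardinality subject to both conditions, and each move preserves the size invariant:
\begin{itemize}
\item \emph{Pruning.} Suppose pruning $v_i$ gave $|U(W\setminus\{i\})|<d$. Then $|U(W\setminus\{i\})|=d-1\ge k+1$ (as $d\ge2k\ge k+2$), and every remaining part is a singleton. Applying \eqref{eq:full-star} and Lemma~\ref{lem:barrier} to $J(W\setminus\{i\})$ gives $\Phi(W\setminus\{i\})\ge1$, contradicting the bound you just proved.
\item \emph{Separator step.} Suppose $W_C$ contains no non-singleton part. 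Then $|U(W_C)|\ge|V(C)|\ge k+1$ by Lemma~\ref{lem:component-size}. So $|U(W_C)|\le d-1$ would again contradict Lemma~\ref{lem:barrier} together with $\Phi(W_C)<1$.
\end{itemize}
With this invariant, $W$ never empties and the final $W$ satisfies $|U(W)|\ge d$ directly. Also $|W|\ge2$ follows from Observation~\ref{obs:phi=1}, so your closing case analysis becomes unnecessary.

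A small wording point: in the separator step, a retained non-singleton $F_i$ can also lose edges leaving $V_i\cap S$, not only edges into $S$. The count still stays at most $|S|$, because every lost edge is incident with $S$ and $F_i$ is a matching.
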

	\begin{proof}
		Let $W\subseteq[t]$ have minimum cardinality subject to the following conditions. 
		\begin{equation}\label{eq:invariants}
			|U(W)|\ge d,\qquad \Phi(W)<1.
		\end{equation}
		Note that as $|U(W)|\ge d>1$, if $W=\{i\}$ for some $i\in [t]$ then $V_i$ must be non-singleton and therefore by Observation \ref{obs:phi=1}, we must have $ \Phi(W)=1$, a contradiction. Thus, $|W|\geq 2$ and it remains to show that $J(W)$ is $k$-connected. Suppose that $J(W)$ is not. The following claim now holds.
		
		\begin{claim}\label{cl:mdk}
			$\delta(J(W))\geq k$.
		\end{claim}
		\begin{proof}
			Let $v\in V(J(W))$ be arbitrary and $V_i$ be the part containing it. If $V_i$ is a non-singleton part then as $H_i\subseteq J(W)$ is $k$-connected, $d_{J(W)}(v)\geq d_{H_i}(v)\geq k$. Suppose that $V_i$ is a singleton and $d_{J(W)}(v)\leq k-1$. We now reach a contradiction by showing that $W\setminus\{i\}$ also satisfies~\eqref{eq:invariants}.
			
			We first show that $\Phi(W\setminus \{i\})< 1$. Indeed, by~\eqref{eq:full-star} and \eqref{eq:omega}, the contribution of $i$ is at least
			\[
			\omega 2^{-\frac{k-1}{k-1}}\geq 2(1-2^{-\frac{1}{k-1}})\cdot 2^{-1}= 1-2^{-\frac{1}{k-1}}.
			\]
			Deleting $v$ removes at most one edge from any other set $F_j(W)$. Thus,
			\begin{align*}
				\Phi(W\setminus\{i\})
				&\le 2^{\frac{1}{k-1}}\bigl(\Phi(W)-(1-2^{-\frac{1}{k-1}})\bigr)\\
				&=\Phi(W)+\frac{1-2^{-\frac{1}{k-1}}}{2^{-\frac{1}{k-1}}}\bigl(\Phi(W)-1\bigr)
				<\Phi(W) <1.
			\end{align*}
			
			It remains to show that $|U(W\setminus\{i\})|\ge d$. Suppose otherwise. Since $|U(W)|\ge d$ and $V_i=\{v\}$, we have $|U(W\setminus\{i\})|=d-1$. Every remaining part is therefore a singleton. As $d-1\ge k+1$, Lemma~\ref{lem:barrier} and~\eqref{eq:full-star} give
			\[
			\Phi(W\setminus\{i\})
			=\omega\sum_{u\in U(W\setminus\{i\})}
			2^{-\frac{d_{J(W\setminus\{i\})}(u)}{k-1}}\ge1,
			\]
			a contradiction. Thus $W\setminus\{i\}$ satisfies~\eqref{eq:invariants}, contrary to the minimality of $W$. This completes the proof of the claim.
		\end{proof}
		
		Since $J(W)$ is not $k$-connected, there is a set $S\subseteq U(W)$, $|S|\le k-1$, such that $J(W)-S$ has at least two components. For each non-singleton part $V_i$, the graph $H_i-(V_i\cap S)$ is nonempty and connected. Assign to a component $C$ the mass
		\[
		\sum_{\substack{i\in W:\ V_i\cap V(C)\neq\varnothing}}
		w_i2^{-\frac{|F_i(W)|}{k-1}}.
		\]
		The component masses sum to at most $\Phi(W)$. Since there are at least two components, choose one with mass at most $\Phi(W)/2$, and let $W'$ be the indices with $V_i\cap V(C)\neq\varnothing$. Since every other component meets a part not in $W'$, we have $\varnothing\ne W'\subsetneq W$.
		
		Fix $i\in W'$. Clearly, any edge of $F_i(W)\setminus F_i(W')$ must be incident with $S$. Thus, by the definition of $F_i(W)$ we have that
		\[
		|F_i(W')|\ge |F_i(W)|-|S|\ge |F_i(W)|-(k-1),
		\]
		and therefore
		\[
		\Phi(W')=\sum_{i\in W'}w_i2^{-\frac{|F_i(W')|}{k-1}}\le \sum_{i\in W'}w_i2^{-\frac{|F_i(W)|-(k-1)}{k-1}}
		\le2\cdot\frac{\Phi(W)}2=\Phi(W)<1.
		\]
		
		By Claim~\ref{cl:mdk} and Lemma~\ref{lem:component-size}, the chosen component \(C\) has at least \(k+1\) vertices. Since \(V(C)\subseteq U(W')\), we have
		\[
		|U(W')|\ge k+1.
		\]
		We claim that in fact \(|U(W')|\ge d\). Suppose otherwise. Since every non-singleton part has at least \(d\) vertices, all parts indexed by \(W'\) must be singletons. Hence \(U(W')=V(C)\), and therefore
		\[
		k+1\le |U(W')|\le d-1.
		\]
		However, Lemma~\ref{lem:barrier} now gives \(\Phi(W')\ge1\), contradicting \(\Phi(W')<1\). Thus \(|U(W')|\ge d\), so \(W'\) also satisfies~\eqref{eq:invariants}, contrary to the minimality of \(W\).
	\end{proof}
	
	\section{Proof of the main theorem}\label{sec:proof}
	We first record the quantitative consequence of the preceding lemmas.
	
	\begin{theorem}\label{thm:quantitative}
		Let $2\le k\le n/2$ and let $d\ge2k$ be any integer satisfying
		\begin{equation}\label{eq:d-condition}
			d(4/3)^{\frac{d}{k-1}}>n.
		\end{equation}
		Define $\omega$ by~\eqref{eq:omega}, and
		\begin{equation}\label{eq:s-threshold}
			s=\max\left\{d+2k-3,\,
			\left\lfloor (k-1)\log_{4/3}(n\omega)\right\rfloor+1\right\}.
		\end{equation}
		Then $f(k,n)\le\min\{n-1,s\}$.
	\end{theorem}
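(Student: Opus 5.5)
We argue by contradiction from a minimum admissible partition. The bound $n-1$ was established in Section~\ref{sec:prelim}, so it suffices to show that every $s$-connected graph $G$ on $n$ vertices has a spanning bipartite $k$-connected subgraph whenever $s\le n-1$. In that case $d\le s-2k+3<n$, so $|U([t])|=n\ge d$. Fix a minimum admissible partition $\mathcal P=\{V_1,\dots,V_t\}$ w.r.t.\ $d$. If $t=1$, then $V_1=V(G)$ is not a singleton (as $n\ge 2k\ge4$), so $H_1$ is the desired subgraph. So assume $t\ge2$. Then Lemmas~\ref{lem:boundary} and~\ref{lem:compatibility} apply since $s\ge d+2k-3$, and the sets $S_i$ are defined.

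\textbf{Step 1: exponential moment bound.} Note that $F_i([t])=F_i$. I will show that
\[
\E\,2^{-|F_i|/(k-1)}\le(3/4)^{|S_i|/(k-1)}.
\]
Condition on the colouring of $V_i$. The edges of $S_i$ split according to the target part $V_j$, and the flips of distinct $V_j$ are independent, so the expectation factorises over $j$. For a fixed $j$, either the $p_{ij}$ edges or the $q_{ij}$ edges become bichromatic, each with probability $1/2$. (For singleton $V_j$ only one edge is involved, and the same formula applies.) Put $x=p_{ij}/(k-1)$ and $y=q_{ij}/(k-1)$; by Lemma~\ref{lem:compatibility} we have $x,y\in[0,1]$. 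It therefore suffices to show
\[
g(x,y)=\ln\tfrac12\bigl(2^{-x}+2^{-y}\bigr)-(x+y)\ln\tfrac34\le0
\quad\text{on }[0,1]^2 .
\]
The function $g$ is a log-sum-exp of linear functions plus a linear term, hence convex, so its maximum over $[0,1]^2$ is attained at a vertex. The values at the vertices are $0,0,0$ and $\ln(8/9)<0$. This is the main technical point, and convexity handles it cleanly.

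\textbf{Step 2: expected potential below one.} Let $a$ and $b$ be the numbers of non-singleton and singleton parts, so that $ad+b\le n$ and $a+b=t\ge1$. By~\eqref{eq:initial-sizes} and Step~1,
\[
\E\,\Phi([t])\le a(3/4)^{d/(k-1)}+b\,\omega(3/4)^{s/(k-1)} .
\]
Condition~\eqref{eq:d-condition} gives $(3/4)^{d/(k-1)}<d/n$. Also~\eqref{eq:s-threshold} gives $s>(k-1)\log_{4/3}(n\omega)$, hence $\omega(3/4)^{s/(k-1)}<1/n$. Therefore $\E\,\Phi([t])<(ad+b)/n\le1$, and some colouring satisfies $\Phi([t])<1$.

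\textbf{Step 3: contradiction with minimality.} Fix a colouring with $\Phi([t])<1$. Lemma~\ref{lem:extraction} yields $W\subseteq[t]$ with $|W|\ge2$, $|U(W)|\ge d$ and $J(W)$ a spanning bipartite $k$-connected subgraph of $G[U(W)]$. Replacing the parts indexed by $W$ with the single part $U(W)$ gives an admissible partition with fewer parts, contradicting the minimality of $\mathcal P$. Hence $t=1$, and the theorem follows.
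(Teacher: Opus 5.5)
Your proposal is correct and matches the paper's proof step for step. The steps are the same: conditional factorisation of the exponential moment with the corner check on $[0,1]^2$, the estimate $\E\,\Phi([t])<(ad+b)/n\le 1$, and the contradiction with minimality via Lemma~\ref{lem:extraction}. The only cosmetic difference is that you prove convexity of the logarithm $g$ rather than of $\frac{2^{-x}+2^{-y}}{2}(4/3)^{x+y}$ itself, and the two are equivalent here.
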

	\begin{proof}
		The bound $n-1$ has been explained in Section~\ref{sec:prelim}, so assume $s<n-1$ and let $G$ be an $s$-connected graph on $n$ vertices. In particular, $n>d$. Choose a minimum admissible partition w.r.t. $d$ as in Section~\ref{sec:partition}. If it has one part, it already provides the required spanning bipartite $k$-connected graph. Suppose it has $t\ge2$ parts.
		
		Choose the edge sets $S_i$ and consider the random two-colouring $c$ defined after the proof of Lemma~\ref{lem:compatibility}. Denote the corresponding edge sets by $F_i^c$. The potential $\Phi([t])$ is then a random variable depending on $c$.
		
		Fix $i\in[t]$ and condition on the colouring of $V_i$. For each $j\ne i$, put
		\[
		X_{ij}(c)=|F_i^c\cap E_G(V_i,V_j)|.
		\]
		Then $X_{ij}(c)$ takes the values $p_{ij}$ and $q_{ij}$ with equal probability, and these random variables are conditionally independent as $j$ varies. By Lemma~\ref{lem:compatibility}, both values lie in $[0,k-1]$. The following claim now holds.
		
		\begin{claim}\label{cl:phi}
			$\E \left[\Phi([t])\right]<1$. 
		\end{claim}
		
		\begin{proof}
			
			For every $i\in[t]$, conditional independence gives
			\begin{equation}\label{eq:moment}
				\E\left[2^{-\frac{|F_i^c|}{k-1}}\,\middle|\,c|_{V_i}\right]
				=\prod_{j\neq i}\E \left[2^{-\frac{X_{ij}(c)}{k-1}}\,\middle|\,c|_{V_i}\right]=\prod_{j\ne i}\frac{2^{-\frac{p_{ij}}{k-1}}+2^{-\frac{q_{ij}}{k-1}}}{2}
				\le\prod_{j\ne i}(3/4)^{\frac{p_{ij}+q_{ij}}{k-1}}
				=(3/4)^{\frac{|S_i|}{k-1}},
			\end{equation}
			where $c|_{V_i}$ denotes the colouring restricted to $V_i$. The product does not depend on this colouring, so the same estimate holds without conditioning. The inequality follows by taking $x=p_{ij}/(k-1)$ and $y=q_{ij}/(k-1)$ and observing that the function $\frac{2^{-x}+2^{-y}}2(4/3)^{x+y}$ is convex on $[0,1]^2$. Its maximum is therefore attained at a corner point. Its values at the four corner points are $1,1,1,8/9$, so the function is at most one throughout the square.
			
			Let $a$ be the number of non-singleton parts and $b$ the number of singleton parts. Then $ad+b\le n$. By \eqref{eq:moment} and~\eqref{eq:initial-sizes}, we have
			
			\[
			\E\left[\Phi([t])\right]
			\le a(3/4)^{\frac{d}{k-1}}+\omega b(3/4)^{\frac{s}{k-1}}
			<\frac{ad+b}{n}\le1.
			\]
			Here the non-singleton estimate follows from~\eqref{eq:d-condition}, and the singleton estimate follows from
			\[
			s>(k-1)\log_{4/3}(n\omega),
			\]
			completing the proof of the claim.
		\end{proof}
		
		Choose a colouring with $\Phi([t])<1$. Lemma~\ref{lem:extraction} now gives a union of at least two parts, of order at least $d$, supporting a spanning bipartite $k$-connected graph. Replacing those parts by their union produces an admissible partition with fewer parts, contradicting minimality. Hence $G$ has the required spanning subgraph.
	\end{proof}
	
	We are now ready to prove our main result.
	
	\begin{proof}[Proof of Theorem~\ref{thm:main}]
		Choose
		\begin{equation}\label{eq:d-explicit}
			d=\left\lfloor(k-1)\log_{4/3}\frac{n}{k-1}\right\rfloor+1.
		\end{equation}
		To check that $d\ge2k$, put $y=2k/(k-1)\in(2,4]$. The function $\ln y-y\ln(4/3)$ is concave and positive at both endpoints of $[2,4]$. Hence $\log_{4/3}y\ge y$. Since $n/(k-1)\ge y$,~\eqref{eq:d-explicit} gives $d>2k$. Also $(4/3)^{d/(k-1)}>n/(k-1)$ and $d\geq 2k>k-1$, so~\eqref{eq:d-condition} holds.
		
		We next claim that
		\begin{equation}\label{eq:weight-bound}
			(k-1)\omega\le 2^{\frac12\log_{4/3}\frac{n}{k-1}}.
		\end{equation}
		Since $n/(k-1)>2$, the right-hand side is greater than $\sqrt2$. For the first term in~\eqref{eq:omega},
		\[
		2(k-1)(1-2^{-1/(k-1)})\le2\ln2<\sqrt2.
		\]
		For the second, put $v=(k+1)/(k-1)=1+2/(k-1)\in[1,3]$. The function $v\mapsto\ln(2^{v/2}/v)$ is convex, so its maximum on $[1,3]$ is at an endpoint. Thus
		\[
		(k-1)\frac{2^{\frac{k+1}{2(k-1)}}}{k+1}
		=\frac{2^{v/2}}v\le\sqrt2.
		\]
		For the last term, as $d-1\ge k-1$ and $d-1\le(k-1)\log_{4/3}\frac{n}{k-1}$, we have
		\[
		(k-1)\frac{2^{\frac{d-1}{2(k-1)}}}{d-1}
		\le 2^{\frac12\log_{4/3}\frac{n}{k-1}}.
		\]
		This proves~\eqref{eq:weight-bound}.
		
		Since $\ln2<7/10$ and $\ln(4/3)>2/7$, we have
		\[
		\log_{4/3}2<\frac{49}{20}<\frac52,
		\qquad (\log_{4/3}2)\left(1+\frac12\log_{4/3}2\right)<\frac{11}{2}.
		\]
		It follows from~\eqref{eq:d-explicit} and~\eqref{eq:weight-bound} that the two terms in~\eqref{eq:s-threshold} satisfy
		\[
		d+2k-3 \le (k-1)\log_{4/3}\frac{n}{k-1}+2(k-1)<\frac52(k-1)\log_2\frac{n}{k-1}+2(k-1),
		\]
		and
		\begin{align*}
			\left\lfloor(k-1)\log_{4/3}(n\omega)\right\rfloor+1
			&\le (k-1)\left(1+\frac12\log_{4/3}2\right)
			\log_{4/3}\frac{n}{k-1}+1\\
			&<\frac{11}{2}(k-1)\log_2\frac{n}{k-1}+1. 
		\end{align*}
		We have $\log_2\frac{n}{k-1}>1$ and $(k-1)\log_2\frac{n}{k-1}\ge2$: when $k=2$ this follows from $n\ge4$, and when $k\ge3$ it follows from $\log_2\frac{n}{k-1}>1$. Therefore, both terms in~\eqref{eq:s-threshold} are strictly less than $6(k-1)\log_2\frac{n}{k-1}$. Thus, Theorem~\ref{thm:quantitative} gives
		\[
		f(k,n)\le\min\left\{n-1,\left\lfloor6(k-1)\log_2\frac{n}{k-1}\right\rfloor\right\},
		\]
		which completes the proof. 
	\end{proof}
	
	We first give a linear lower bound within the normalization $n\ge2k$.
	
	\section{Concluding remarks}\label{sec:conclusion}
	The main conclusion is $f(k,n)=O(k\log(n/k))$. The result retains a logarithmic dependence on $n$ when $k$ is fixed. Removing that dependence would settle Thomassen's conjecture.
	
	The potential links random alignment to separator descent because losing at most $k-1$ attachment edges multiplies a summand by at most two. The contribution of a non-singleton part is controlled by a boundary matching, while a singleton is controlled by its full star and can be pruned when its degree is too small. Bipartiteness has a second role after the colouring step: it supplies both the edge bound in Lemma~\ref{lem:barrier} and the component-order bound in Lemma~\ref{lem:component-size}.
	
	The present proof still requires
	\[
	\frac nd(3/4)^{\frac{d}{k-1}}<1
	\]
	to control the possible number of non-singleton parts in the initial potential. The size barrier also makes the singleton weight depend on $d$. These are conditions of this argument; they do not establish a limitation on the possible true value of $f(k,n)$.
	
	One possible direction is to use the ambient connectivity again during separator descent, obtaining fresh attachment edges after losses have occurred. Such edges would have to respect the bipartition already chosen. Another is to exploit additional structure of a minimum admissible partition to control its aggregate attachment behaviour. 
	
	Finally, the proof is existential. Choosing a minimum admissible partition does not by itself give a polynomial-time algorithm for finding the spanning bipartite $k$-connected subgraph.

\end{document}